\documentclass[12pt]{article}
\usepackage{lmodern}
\usepackage{microtype}
\usepackage[english]{babel}
\usepackage[all]{xy}
\usepackage{fancyhdr}
\usepackage{setspace, amsmath, amsthm, amssymb, amsfonts, amscd, epic, graphicx, ulem, dsfont}
\usepackage{amsthm}
\usepackage[T1]{fontenc}
\usepackage{cases}
\newtheorem{theorem}{Theorem}

\newtheorem{proposition}[theorem]{Proposition}
\newtheorem{corollary}[theorem]{Corollary}
\newtheorem{example}[theorem]{Example}

\makeatletter

\@addtoreset{equation}{section}
\makeatother

\title{On the non-existence of symphonic maps}

\author{Ahmed Mohammed Cherif\footnote{University Mustapha Stambouli Mascara, Faculty of Exact Sciences, Mascara 29000, Algeria. Email: a.mohammedcherif@univ-mascara.dz} and Kaddour Zegga\footnote{University Mustapha Stambouli Mascara, Faculty of Exact Sciences, Mascara 29000, Algeria. Email: zegga.kadour@univ-mascara.dz}}
\date{}

\begin{document}
\maketitle
	
\begin{abstract}
In this paper, we study the existence of symphonic  maps on compact or complet non-compact Riemannian manifold into Riemannian manifolds admitting a conformal vector field or a non-trivial Ricci solitons.\\
\textit{\textbf{Keywords:}} Symphonic maps, Conformal vector fields, Ricci solitons.\\
\textit{\textbf{MSC 2020:}} 58E20,  53C43.
\end{abstract}

\section{Introduction}

Liouville-type theorems for symphonic maps typically assert that under certain conditions, a symphonic map must be trivial (e.g., constant). Symphonic maps are critical points of variational problems, such as the energy functional, under variations that preserve volume. These maps generalize harmonic maps and arise in the context of various geometric and physical settings.\\
Let $(M^{m}, g)$, $(N^{n}, h)$ be Riemannian manifolds without boundary, and let $\varphi$ be a smooth map from $M^m$ into $N^n$.  Let $\varphi^{*}h$ be the pullback of the metric $h$ by $\varphi$, i.e.,
$$( \varphi^{*}h)(X, Y ) = h(d \varphi (X), d \varphi (Y )),$$
for any vector fields $X$, $Y$ on $M^m$. We consider the functional or the symphonic energy
$$E_{sym}(\varphi)=\int_{M^m}|\varphi^{*}h|^{2}dv_{g},$$
where $dv_{g}$ is the volume form on $(M^m, g)$, and  $|\varphi^{*}h|$ denotes the norm of the pullback  $\varphi^{*}h$, i.e.,
 $$|\varphi^{*}h|^{2}=h(d\varphi(e_{i}),d\varphi(e_{j}))^{2},$$  and
 $\{e_{i}\}_{i=1}^m$ is a local orthonormal frame on $(M^{m}, g)$.
 The symphonic energy $E_{sym}(\varphi)$ is related to the energy $E(\varphi)$ in the theory of harmonic maps since the functional $E_{sym}(\varphi)$ is an integral of the norm of the pullback  $\varphi^{*}h$, while the energy $E(\varphi)$ is an integral of the trace of the pullback $\varphi^{*}h$. Indeed, the energy $E(\varphi )$ is defined to be
$$ E(\varphi)=\int_{M^m}|d\varphi|^{2}dv_{g},$$
where $|d\varphi|^{2} =h(d\varphi(e_{i}), d\varphi(e_{i}))$.  A map $\varphi$ is called harmonic if it is a critical point of the functional energy $E(\varphi)$, i.e., if the first variation of $E(\varphi)$ at $\varphi$ vanishes.
If $M$ is noncompact, $\varphi$ is defined to be a harmonic map if it is a critical point of the functional energy $E(\varphi)$ on any compact subdomain of $M^m$. The theory of harmonic maps is developed with applications to other fields \cite{EL,ES}. The first variation formula of the  functional energy  $E_{sym}(\varphi)$  was given by Shigeo Kawaia, Nobumitsu Nakauchi in \cite{KN}.

\begin{proposition}[First Variation Formula \cite{KN}]
We have
$$\frac{dE_{sym}(\varphi_{t})}{dt}\Big|_{t=0}=-4\int_{M}h(\operatorname{div}_{g}\sigma_{\varphi},\upsilon)dv_{g},$$
for any variation vector field $\upsilon$, where  $\sigma_{\varphi}$ is defined by
$$\sigma_{\varphi}(X)=h(d\varphi(X),d\varphi(e_{j}))d\varphi(e_{j}), \quad  \forall  X \in \Gamma(TM), $$ and  the divergence of $\sigma_{\varphi}$ is given  by
\begin{eqnarray}\label{EQ1}
\nonumber \operatorname{div}_{g}\sigma_{\varphi}
&=& h(\nabla d\varphi(e_{i},e_{i}),d\varphi(e_{j}))d\varphi(e_{j}) + h(d\varphi(e_{i}),\nabla d\varphi(e_{i},e_{j}))d\varphi(e_{j})\\
&& +h(d\varphi(e_{i}),d\varphi(e_{j}))\nabla d\varphi(e_{i},e_{j}).
\end{eqnarray}
\end{proposition}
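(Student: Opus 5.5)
We compute the derivative directly for a smooth variation $\varphi_t$ with $\varphi_0=\varphi$, then move derivatives off the variation field by the divergence theorem. We regard the family as a single map $\Phi: M\times(-\epsilon,\epsilon)\to N$, $\Phi(x,t)=\varphi_t(x)$, with variation field $\upsilon=d\Phi(\partial_t)|_{t=0}\in\Gamma(\varphi^{-1}TN)$. If $M$ is noncompact we take $\upsilon$ compactly supported, as in the harmonic map convention. We write $\nabla$ for the pullback connection on $\Phi^{-1}TN$ and work with a local orthonormal frame $\{e_i\}$ on $M$, extended trivially in $t$, so that $[\partial_t,e_i]=0$.

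\textbf{Step 1 (differentiate under the integral).} Since $dv_g$ does not depend on $t$,
$$\frac{d}{dt}E_{sym}(\varphi_t)=\int_M 2\,h(d\varphi_t(e_i),d\varphi_t(e_j))\,\frac{\partial}{\partial t}h(d\varphi_t(e_i),d\varphi_t(e_j))\,dv_g .$$
Metric compatibility gives $\partial_t h(d\varphi_t(e_i),d\varphi_t(e_j))=h(\nabla_{\partial_t}d\Phi(e_i),d\Phi(e_j))+h(d\Phi(e_i),\nabla_{\partial_t}d\Phi(e_j))$. Because $\nabla$ is torsion free and $[\partial_t,e_i]=0$, we have $\nabla_{\partial_t}d\Phi(e_i)=\nabla_{e_i}d\Phi(\partial_t)$. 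The two terms are symmetric under $i\leftrightarrow j$, so at $t=0$ the integrand becomes
$$4\,h(d\varphi(e_i),d\varphi(e_j))\,h(\nabla_{e_i}\upsilon,d\varphi(e_j))=4\,h(\nabla_{e_i}\upsilon,\sigma_\varphi(e_i)).$$

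\textbf{Step 2 (integrate by parts).} Define the vector field $Z$ on $M$ by $g(Z,X)=h(\upsilon,\sigma_\varphi(X))$; it is smooth and compactly supported. At a point where the frame is normal ($\nabla e_i=0$),
$$\operatorname{div} Z=e_i\,h(\upsilon,\sigma_\varphi(e_i))=h(\nabla_{e_i}\upsilon,\sigma_\varphi(e_i))+h(\upsilon,(\nabla_{e_i}\sigma_\varphi)(e_i)).$$
Here $\operatorname{div}_g\sigma_\varphi:=(\nabla_{e_i}\sigma_\varphi)(e_i)$, where $\sigma_\varphi$ is viewed as a $\varphi^{-1}TN$-valued $1$-form. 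The divergence theorem kills $\int_M\operatorname{div}Z$, and this yields the stated formula with the factor $-4$.

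\textbf{Step 3 (expand the divergence).} This is the only genuinely computational point. In a normal frame at the point, $(\nabla_{e_i}\sigma_\varphi)(e_i)=\nabla_{e_i}\big(h(d\varphi(e_i),d\varphi(e_j))d\varphi(e_j)\big)$. We apply the Leibniz rule, together with $\nabla_{e_i}d\varphi(e_j)=\nabla d\varphi(e_i,e_j)$ (valid since $\nabla e_j=0$ at the point), which produces exactly the three terms of (\ref{EQ1}). The main care needed is twofold. First, the derivative falling on the $j$-indexed frame vectors must cancel, because the expression $h(d\varphi(X),d\varphi(e_j))d\varphi(e_j)$ is frame independent; using a normal frame handles this. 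Second, the symmetry $\nabla d\varphi(e_i,e_j)=\nabla d\varphi(e_j,e_i)$ is not actually needed here, but it is used implicitly when one compares with the form in \cite{KN}.
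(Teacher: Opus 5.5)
Your proof is correct: the factor $4$ from the $i\leftrightarrow j$ symmetry, the commutation $\nabla_{\partial_t}d\Phi(e_i)=\nabla_{e_i}d\Phi(\partial_t)$, the integration by parts against the compactly supported field $Z$, and the normal-frame Leibniz expansion all check out and reproduce the three-term divergence formula exactly (and, as you note, no symmetry of $\nabla d\varphi$ is needed). The paper does not prove this itself but quotes it from Kawai--Nakauchi, and your argument is the standard direct computation of that formula. One refinement: for noncompact $M$, take the whole variation $\varphi_t$ to be fixed outside a compact set $K$ and differentiate the energy over $K$, since $E_{sym}(\varphi)$ itself may be infinite.
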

Recall that the map $\varphi$ is called symphonic if it is a critical point of the functional energy
 $E_{sym}$.
We denote by  $\tau^{s}(\varphi)=\operatorname{div}_{g}\sigma_{\varphi}$. Thus, the map $\varphi$ is symphonic if and only if  $\tau^{s}(\varphi)=0$.\\
A smooth vector field $\xi$ on a Riemannian manifold $(M^m,g)$ is said to a conformal vector field if there exists a smooth function $f$ on $M^m$ that satisfies
\begin{eqnarray}\label{E11}
\mathfrak{L}_{\xi} g=2f g,
\end{eqnarray} where $\mathfrak{L}_{\xi} g$ is the Lie derivative of $g$ with respect $\xi$, that is the flow of the vector field $\xi$ consists of conformal transformations of the Riemannian manifold $(M^m,g)$, the function $f$ is called the potential function of the conformal vector field $\xi$. Conformal vector fields generalize Killing vector fields, which are infinitesimal isometries that preserve the metric exactly (i.e., for which $f=0$). Conformal vector fields are important in theoretical physics, particularly in the context of general relativity and conformal field theories, where conformal transformations play a central role. We say $\xi$ a nontrivial conformal vector field if $\xi$  is a non-Killing conformal vector field  (for more detail see \cite{OBT,TAW,TASH}).\\
A Ricci soliton structure on a Riemannian manifold $(M^m,g)$ is the choice of a
smooth vector field $\xi$ satisfying the soliton equation
\begin{equation}\label{eq1.6}
   \operatorname{Ric}^{M^m}+\frac{1}{2}\mathfrak{L}_\xi g=\lambda g,
\end{equation}
for some constant $\lambda\in\mathbb{R}$, where $\operatorname{Ric}^{M^m}$ is the Ricci curvature of $(M^m,g)$.
The Ricci soliton $(M^m,g,\xi,\lambda)$ is said to be shrinking,
steady or expansive according to whether the coefficient $\lambda$ appearing in equation (\ref{eq1.6}) satisfies
$\lambda > 0$, $\lambda = 0$ or $\lambda < 0$ (see \cite{RH,H2}). If $(M^m, g)$ is not Einstein, we call the Ricci soliton non-trivial.\\
In \cite{cherif3}, the author studied the existence of harmonic maps into
Riemannian manifolds admitting a conformal vector field, or
a non-trivial Ricci soliton. In this paper, we study the case of symphonic maps into Riemannian manifolds admitting
such special vector fields with some assumptions.

\section{Main result}

\subsection{Symphonic map  and conformal vector fields}
\begin{proposition}\label{comp}
Let $(M^m,g)$ be a compact connected orientable Riemannian manifold without boundary, and let $(N^n,h)$ be a Riemannian manifold admitting a conformal vector field $\xi$ with potential function $f > 0$. Then, any symphonic map $\varphi:
(M^m,g)\longrightarrow(N^n,h)$ is constant.
\end{proposition}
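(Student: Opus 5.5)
The plan is to pair the symphonic equation $\tau^{s}(\varphi)=0$ with the conformal field $\xi$ pulled back along $\varphi$, and integrate over $M^m$. Concretely, I will define a $1$-form (equivalently a vector field) on $M^m$ by
$$\omega(X)=h\big(\sigma_{\varphi}(X),\xi\circ\varphi\big)=h\big(d\varphi(X),d\varphi(e_j)\big)\,h\big(d\varphi(e_j),\xi\circ\varphi\big),\qquad X\in\Gamma(TM).$$
Then I will compute its divergence $\operatorname{div}_g\omega=(\nabla_{e_i}\omega)(e_i)$ at a point, using a normal frame $\{e_i\}$. By the Leibniz rule this splits into two pieces:
$$\operatorname{div}_g\omega=h\big(\operatorname{div}_g\sigma_{\varphi},\xi\circ\varphi\big)+h\big(\sigma_{\varphi}(e_i),\nabla^{\varphi}_{e_i}\xi\big).$$
The first piece is exactly $h(\tau^{s}(\varphi),\xi\circ\varphi)$, which vanishes because $\varphi$ is symphonic. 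For the second piece, note $\nabla^{\varphi}_{e_i}\xi=\nabla^{N}_{d\varphi(e_i)}\xi$, and expand $\sigma_\varphi(e_i)=h(d\varphi(e_i),d\varphi(e_j))d\varphi(e_j)$. This gives
$$h\big(d\varphi(e_i),d\varphi(e_j)\big)\,h\big(\nabla^{N}_{d\varphi(e_i)}\xi,d\varphi(e_j)\big).$$
The coefficient $h(d\varphi(e_i),d\varphi(e_j))$ is symmetric in $i,j$, so only the symmetric part of $h(\nabla^N_U\xi,V)$ contributes. That symmetric part is $\tfrac12(\mathfrak{L}_\xi h)(U,V)=f\,h(U,V)$ by (\ref{E11}). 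Hence the second piece equals
$$(f\circ\varphi)\,h\big(d\varphi(e_i),d\varphi(e_j)\big)^2=(f\circ\varphi)\,|\varphi^{*}h|^{2}.$$

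With this identity, $\operatorname{div}_g\omega=(f\circ\varphi)|\varphi^*h|^2$ on $M^m$. Since $M^m$ is compact, orientable and without boundary, the divergence theorem gives $\int_M (f\circ\varphi)|\varphi^*h|^2\,dv_g=0$. Because $f>0$, it follows that $|\varphi^*h|^2\equiv0$.

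It remains to pass from $\varphi^*h=0$ to constancy. In particular the diagonal entries $h(d\varphi(e_i),d\varphi(e_i))$ all vanish, so $|d\varphi|^2=0$ and $d\varphi=0$. By connectedness of $M^m$, $\varphi$ is constant.

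The main obstacle is the divergence computation. One has to check carefully that the non-symmetric (skew) part of $\nabla\xi$ drops out against the symmetric tensor $h(d\varphi(e_i),d\varphi(e_j))$. One also has to check that the first term matches the expression (\ref{EQ1}) for $\operatorname{div}_g\sigma_\varphi$ exactly, with no leftover terms. I would carry this out in a normal frame at a point to keep the computation clean.
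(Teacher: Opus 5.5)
Your proposal is correct and follows essentially the same route as the paper. You use the same $1$-form $\omega(X)=h(\xi\circ\varphi,\sigma_\varphi(X))$ and the identity $\operatorname{div}\omega=(f\circ\varphi)|\varphi^*h|^2$, obtained from the symphonic equation and the conformal condition, then apply the divergence theorem and conclude from the vanishing of the diagonal entries $h(d\varphi(e_i),d\varphi(e_i))$. Your explicit remark that only the symmetric part $\tfrac12\mathfrak{L}_\xi h$ of $\nabla\xi$ survives against the symmetric coefficients $h(d\varphi(e_i),d\varphi(e_j))$ fills in the step the paper compresses into ``using the fact that $\xi$ is a conformal vector field.''
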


\begin{proof}
Let $X\in \Gamma(TM)$, and let $\{e_i\}_{i=1}^m$ be a normal orthonormal frame at $x \in M^m$. We set
$$\omega(X)=h(\xi \circ \varphi,\sigma_{\varphi}(X)),$$
Then, we have at $x$
\begin{eqnarray}\nonumber
\operatorname{div}^{M^m}\omega &=& h\big(\nabla^{\varphi}_{e_{i}}\xi \circ \varphi,\sigma_{\varphi}(e_{i})\big)
+h(\xi \circ \varphi,\nabla^{\varphi}_{e_{i}}\sigma_{\varphi}(e_{i})).
\end{eqnarray}
Using the fact that $\varphi$ is symphonic and $\xi$ is a conformal vector field in $(N^n,h)$, we find that
\begin{equation}
\operatorname{div}^{M^m}\omega = (f\circ \varphi)h(d\varphi(e_{i}),d\varphi(e_{j}))^{2}.
\end{equation}
Integrating over the compact manifold $M^m$, we obtain
\begin{equation}
\int_{M^m}(\operatorname{div}^{M^m}\omega) dv_{g} =\int_{M^m} (f\circ \varphi)h(d\varphi(e_{i}),d\varphi(e_{j}))^{2} dv_{g}.
\end{equation}
Using the divergence Theorem (see \cite{BW}), and the fact that $$h(d\varphi(e_{i}),d\varphi(e_{j}))^{2}\geq h(d\varphi(e_{i}),d\varphi(e_{i}))^{2},$$ we have
$$0\geq \int_{M^m} (f\circ \varphi)h(d\varphi(e_{i}),d\varphi(e_{i}))^{2} dv_{g},$$
thus for all $i=1,\dots,m$, we have $h(d\varphi(e_{i}),d\varphi(e_{i}))=0$, so $\varphi$ is constant.
\end{proof}

\begin{example}
The Euclidian space $(\mathbb{R}^n,h=dy_1^2+...+dy_n^2)$ (resp. the hyperbolic space $(\mathbb{H}^n,h=y_n^{-2}(dy_1^2+...+dy_n^2))$) has a conformal vector field $\xi$ with potential function $f(y)\neq0$ at any point $y\in\mathbb{R}^n$ (resp. $y\in\mathbb{H}^n$)
given by $\xi=y_i\frac{\partial}{\partial y_i},$  where $f(y)=1$ for all $y\in\mathbb{R}^n$
(resp. $\xi=\sum_{i=1}^{n-1}y_i\frac{\partial}{\partial y_i}+(y_n-1)\frac{\partial}{\partial y_n},$
where $f(y)=y_n^{-1}$ for all $y\in\mathbb{H}^n$). Hence,
any symphonic map $\varphi$ from compact connected orientable Riemannian manifold without boundary into $\mathbb{R}^n$ or $\mathbb{H}^n$
is constant map.
\end{example}

From Proposition \ref{comp}, we derive the following result.

\begin{corollary}\label{C1}
Let $(\overline{N}^n,\overline{h})$ be a Riemannian manifold that admits a Killing vector field $\overline{\xi}$. Consider $(N^{n-1},h)$ as a Riemannian hypersurface of $(\overline{N}^n,\overline{h})$, where $h$ is the induced Riemannian metric of $\overline{h}$ on $N^{n-1}$. Suppose that

i) $(N^{n-1},h)$ is totally umbilical, i.e.,
$$B(X,Y) = \rho h(X,Y)\eta, \quad \forall \, X,Y \in \Gamma(TN),$$
for some smooth function $\rho$ on $N^{n-1}$, where $B$ is the second fundamental form of $N^{n-1}$ in $\overline{N}^n$, and $\eta$ is the unit normal vector field to $N^{n-1}$ in $\overline{N}^{n}$.

ii) The function $\overline{h}(\overline{\xi}, H) \neq 0$ everywhere on $N^{n-1}$, where $H = \frac{1}{n-1}\operatorname{trace}_{h} B$ is the mean curvature of $(N^{n-1},h)$ in $(\overline{N}^{n},\overline{h})$.

Then, any symphonic map from a compact connected orientable Riemannian manifold without boundary to $(N^{n-1},h)$ is constant.
\end{corollary}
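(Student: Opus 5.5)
The plan is to produce, on $N^{n-1}$, a conformal vector field $\xi$ with nowhere-vanishing potential function, and then apply Proposition \ref{comp}. The natural candidate is the tangential part of the Killing field: along $N^{n-1}$ write
$$\overline{\xi}=\xi+\overline{h}(\overline{\xi},\eta)\eta,$$
where $\xi$ is tangent to $N^{n-1}$. I will show that $\xi$ is conformal on $(N^{n-1},h)$ with potential $f=-\overline{h}(\overline{\xi},H)$ (up to sign convention). Total umbilicity gives $H=\rho\eta$, so $f=-\rho\,\overline{h}(\overline{\xi},\eta)$.

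For the computation, take $X,Y\in\Gamma(TN)$ and use the Gauss formula $\overline{\nabla}_XY=\nabla_XY+B(X,Y)$. Then
$$(\mathfrak{L}_{\xi}h)(X,Y)=h(\nabla_X\xi,Y)+h(\nabla_Y\xi,X)=\overline{h}(\overline{\nabla}_X\xi,Y)+\overline{h}(\overline{\nabla}_Y\xi,X).$$
Next substitute $\xi=\overline{\xi}-\overline{h}(\overline{\xi},\eta)\eta$. The Killing equation kills the symmetrized $\overline{\nabla}\,\overline{\xi}$ terms. The remaining terms are
$$-\overline{h}(\overline{\xi},\eta)\big(\overline{h}(\overline{\nabla}_X\eta,Y)+\overline{h}(\overline{\nabla}_Y\eta,X)\big),$$
since $\eta\perp Y$. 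By the Weingarten formula, $\overline{h}(\overline{\nabla}_X\eta,Y)=-\overline{h}(B(X,Y),\eta)=-\rho h(X,Y)$. Hence
$$\mathfrak{L}_{\xi}h=2\rho\,\overline{h}(\overline{\xi},\eta)\,h=2\,\overline{h}(\overline{\xi},H)\,h.$$
So $\xi$ is conformal with potential $f=\overline{h}(\overline{\xi},H)$. Whatever sign convention is used, it is nowhere zero by hypothesis ii).

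Proposition \ref{comp} requires $f>0$, while ii) only gives $f\neq0$. If $N^{n-1}$ is connected, $f$ has constant sign by continuity. If $f<0$, I replace $\xi$ by $-\xi$ (equivalently $\overline{\xi}$ by $-\overline{\xi}$, still Killing), whose potential is $-f>0$. If $N^{n-1}$ is not connected, the image of a connected compact $M^m$ lies in a single component, and I apply the same argument on that component. Alternatively, the proof of Proposition \ref{comp} works verbatim when $f<0$, since one just reverses the inequality.

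Finally, apply Proposition \ref{comp} to $\varphi:(M^m,g)\to(N^{n-1},h)$ with the conformal field $\pm\xi$, which gives that $\varphi$ is constant. The main point needing care is the sign and normalization bookkeeping in the Weingarten computation, together with the constant-sign reduction. Everything else is routine.
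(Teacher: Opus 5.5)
Your proposal is correct and follows the same route as the paper. You split $\overline{\xi}=\xi+\overline{h}(\overline{\xi},\eta)\eta$ along $N^{n-1}$, use the Killing equation together with the Gauss--Weingarten formulas and umbilicity to get $\mathfrak{L}_{\xi}h=2\,\overline{h}(\overline{\xi},H)\,h$, and then apply Proposition~\ref{comp}. You actually do slightly more than the paper: it cites this Lie-derivative identity rather than computing it, and it silently passes from $\overline{h}(\overline{\xi},H)\neq 0$ to the hypothesis $f>0$ of Proposition~\ref{comp}, whereas your constant-sign argument (by connectedness, replacing $\xi$ with $-\xi$ if needed) closes that small gap.
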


\begin{proof}
We have $\overline{\xi} = \xi + f\eta$, where $\xi$ is the tangential component of $\overline{\xi}$ along $N^{n-1}$, and $f$ is a smooth function on $N^{n-1}$. Since $\overline{\xi}$ is a Killing vector field and $(N^{n-1},h)$ is totally umbilical, we get (see \cite{cherif3})
\begin{equation}\label{conf}
(\mathfrak{L}_{\xi}h)(X,Y) = 2f\rho h(X,Y),
\end{equation}
The Corollary follows from Proposition \ref{comp}, and equation (\ref{conf}), with the potential function
$$f \rho = \overline{h}(\overline{\xi}, \eta)\overline{h}(H,\eta) = \overline{h}(\overline{\xi}, H),$$
which is non-zero everywhere on $N^{n-1}$ by assumption ii).
\end{proof}

\begin{example}
 We consider the hypersurface $$N^{n-1}=\mathbb{S}^{n-1}\cap \{y \in \mathbb{R}^n \,|\, y_n > 0 \},$$ in $ \overline{N}^n = \mathbb{R}^n$  equipped with the standard Riemannian metric $\overline{h} = <,>_{\mathbb{R}^n} $, where $\mathbb{S}^{n-1}$
is the unit $(n-1$)-dimensional sphere on $\mathbb{R}^n$. Let $h$ the induced Riemannian metric on $N$. We have\\
i) \   $B(X, Y ) = -h(X, Y )P$  for all $X, Y \in \Gamma(T N)$, where $P$ is the position vector field of $\mathbb{R}^n$, then  $(N^{n-1}, h)$ is totally umbilical, with  $\rho = 1 \ and \  \eta = -P $ along $N^{n-1}$.\\
ii) Moreover, we have  $$h(\frac{\partial}{\partial y_n},H)= -y_n \neq 0,$$
everywhere on $N^{n-1}$ (see \cite{cherif3}). Since $\overline{\xi} = \frac{\partial}{\partial y_n}$ is a Killing vector field on ($\overline{N}^n, \overline{h})$. According to the Corollary \ref{C1}, any symphonic map from a compact connected orientable Riemannian manifold without boundary to $(N^{n-1},h)$ is constant.
 \end{example}


In the case of complete non-compact Riemannian manifold, we obtain the following results.

\begin{proposition}\label{noncomp}
Let $(M^m,g)$ be a complete non-compact Riemannian manifold,  $(N^n,h)$ a Riemannian manifold admitting a conformal vector field $\xi$ with potential function $f > 0$, and  $\varphi : (M^m,g) \longrightarrow (N^n,h)$ a symphonic map. We assume that
\begin{equation}
\int_{M^m}\frac{|\xi \circ \varphi|^{2} |d\varphi|^{2}}{f \circ \varphi} dv_{g} < \infty.
\end{equation}
Then $\varphi$ is constant.
\end{proposition}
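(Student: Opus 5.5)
We follow the proof of Proposition \ref{comp}, replacing the global divergence theorem by a cutoff argument of Gaffney/Yau type. Fix a point $x_0\in M^m$. For $R>0$ let $\mu=\mu_R$ be a smooth cutoff function with $0\le\mu\le1$, $\mu=1$ on the geodesic ball $B(x_0,R)$, $\mu=0$ outside $B(x_0,2R)$, and $|\nabla\mu|\le 2/R$. Such a function exists because $(M^m,g)$ is complete. We then consider the compactly supported $1$-form
$$\omega(X)=\mu^{2}\,h\big(\xi\circ\varphi,\sigma_{\varphi}(X)\big).$$

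As in the proof of Proposition \ref{comp}, we use three facts: $\tau^{s}(\varphi)=0$, $\sigma_\varphi(e_i)=h(d\varphi(e_i),d\varphi(e_j))d\varphi(e_j)$ is symmetric in the appropriate sense, and $h(\nabla_Y\xi,Y)=f\,|Y|^2$. From these,
$$\operatorname{div}^{M^m}\omega=\mu^{2}(f\circ\varphi)\,|\varphi^{*}h|^{2}+2\mu\, h\big(\xi\circ\varphi,\sigma_\varphi(\nabla\mu)\big).$$
Since $\omega$ has compact support, the divergence theorem gives
$$\int_{M}\mu^{2}(f\circ\varphi)|\varphi^{*}h|^{2}\,dv_g=-2\int_M\mu\, h\big(\xi\circ\varphi,\sigma_\varphi(\nabla\mu)\big)\,dv_g .$$

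The main step is to bound the right-hand side. We expect this to be the main obstacle, since it must match exactly the integrability hypothesis. We write
$$h(\xi\circ\varphi,\sigma_\varphi(\nabla\mu))=(\varphi^{*}h)\big(\nabla\mu,\,(d\varphi)^{T}(\xi\circ\varphi)\big),$$
where $(d\varphi)^T$ is the adjoint of $d\varphi$. By Cauchy--Schwarz for the symmetric tensor $\varphi^*h$,
$$|h(\xi\circ\varphi,\sigma_\varphi(\nabla\mu))|\le|\varphi^{*}h|\,|\nabla\mu|\,|d\varphi|\,|\xi\circ\varphi|.$$
Young's inequality, $2ab\le \frac12 \mu^2 f a^2+\frac{2}{f}b^2$, then yields
$$\int_{M}\mu^{2}(f\circ\varphi)|\varphi^{*}h|^{2}\,dv_g\le \frac12\int_M\mu^{2}(f\circ\varphi)|\varphi^{*}h|^{2}\,dv_g+2\int_M\frac{|\xi\circ\varphi|^{2}|d\varphi|^{2}}{f\circ\varphi}|\nabla\mu|^{2}\,dv_g .$$
Absorbing the first term on the right (it is finite because $\mu$ has compact support) and using $|\nabla\mu|\le 2/R$, we get
$$\int_{B(x_0,R)}(f\circ\varphi)|\varphi^{*}h|^{2}\,dv_g\le\frac{16}{R^{2}}\int_M\frac{|\xi\circ\varphi|^{2}|d\varphi|^{2}}{f\circ\varphi}\,dv_g .$$
This is exactly where the assumed finiteness enters.

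Letting $R\to\infty$ gives $\int_M (f\circ\varphi)|\varphi^*h|^2\,dv_g=0$. Since $f>0$, this forces $|\varphi^*h|^2=0$ everywhere. As in the proof of Proposition \ref{comp}, $|\varphi^*h|^2\ge\sum_i h(d\varphi(e_i),d\varphi(e_i))^2$, so $d\varphi=0$. Hence $\varphi$ is constant on the connected manifold $M^m$. (Connectedness is implicit here; otherwise the conclusion holds on each component.)
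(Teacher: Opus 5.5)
Your proposal is correct and follows essentially the same route as the paper: the same compactly supported 1-form $\mu^{2}h(\xi\circ\varphi,\sigma_{\varphi}(\cdot))$, the same divergence identity, absorption of the cut-off term by Young's inequality, and $R\to\infty$ with the same constant $16/R^{2}$. The difference is cosmetic. You bound the cross term by Cauchy--Schwarz for $\varphi^{*}h$ and a Young inequality weighted by $f\circ\varphi$, whereas the paper applies Young index-wise with the pointwise parameter $\lambda=2|d\varphi|^{2}/(f\circ\varphi)$; your version has the small advantage of not needing to restrict to $M^m_+$.
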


\begin{proof}
Let $\rho$ be a smooth function with compact support on $M^m$, we set
$$\omega(X)=h(\xi \circ \varphi,\rho^{2}\sigma_{\varphi}(X)), \quad \forall X\in \Gamma(TM).$$
We compute the divergence of $\omega$
\begin{eqnarray}
\operatorname{div}^{M^m}\omega
&=&\nonumber h\big(\nabla^{\varphi}_{e_{i}}\xi \circ \varphi,\rho^{2}\sigma_{\varphi}(e_{i})\big)+ 2\rho e_{i}(\rho)h(\xi \circ \varphi,\sigma_{\varphi}(e_{i}))\\
&&+ \rho^{2}h(\xi \circ \varphi,\nabla^{\varphi}_{e_{i}}\sigma_{\varphi}(e_{i})),
\end{eqnarray}
where $\{e_i\}_{i=1}^m$ is a local orthonormal frame of vector fields on $M^m$.
Since $\varphi$ is symphonic map, and $\xi$ is conformal vector field, we find that
\begin{equation}\label{div}
\operatorname{div}^{M^m}\omega = \rho^{2}(f\circ \varphi)h(d\varphi(e_{i}),d\varphi(e_{j}))^{2}+ 2\rho e_{i}(\rho)h(\xi \circ \varphi,\sigma_{\varphi}(e_{i})).
\end{equation}
By Young's inequality, for any $\lambda > 0$, we have
$$-2\rho e_{i}(\rho)h(\xi \circ \varphi,\sigma_{\varphi}(e_{i}))\leq \lambda e_{i}(\rho)^{2}| \xi\circ\varphi |^{2}+\frac{\rho^{2}}{\lambda} | \sigma_{\varphi}(e_{i}) |^{2}. $$
Combining with (\ref{div}), we get
\begin{eqnarray*}
   \rho^{2}(f\circ \varphi)h(d\varphi(e_{i}),d\varphi(e_{j}))^{2}-\operatorname{div}^{M^m}\omega
   &\leq& \lambda e_{i}(\rho)^{2}| \xi \circ \varphi |^{2} \\
   &&  +\frac{\rho^{2}}{\lambda}h(d\varphi(e_{i}),d\varphi(e_{j}))^{2} | d\varphi(e_{j}) |^{2}.
\end{eqnarray*}
Take $\lambda=\frac{2 \vert d\varphi \vert^{2}}{f \circ \varphi}$ on $M^m_+=\{x\in M^m\,,\,\vert d\varphi \vert>0\}$.
As $| d\varphi(e_{j}) |^{2}\leq | d\varphi |^{2}$, $\forall j=\overline{1,m}$, we deduce the inequality
 \begin{equation}\label{youn}
 \frac{1}{2}\rho^{2}(f \circ \varphi)h(d\varphi(e_{i}),d\varphi(e_{j}))^{2}-\operatorname{div}^{M^m}\omega \leq \frac{2\vert d\varphi \vert^{2}}{f \circ \varphi} e_{i}(\rho)^{2}| \xi \circ \varphi |^{2}.
 \end{equation}
 By the divergence Theorem, and (\ref{youn}) we have
\begin{equation}\label{ineg}
\frac{1}{2}\int_{M^m}\rho^{2}(f \circ \varphi)h(d\varphi(e_{i}),d\varphi(e_{j}))^{2}dv_{g}
\leq 2\int_{M^m}\frac{\vert d\varphi \vert^{2}| \xi \circ \varphi |^{2}}{f \circ \varphi} e_{i}(\rho)^{2}dv_{g}.
\end{equation}
Consider the smooth cut-off function $\rho = \rho_{R}$, such that $\rho \leq 1$ on $M^m$, $\rho = 1$ on the geodesic ball $B(p,R)$, $\rho = 0$ on $M^m \setminus B(p,2R)$, and $| \operatorname{grad}^{M^m}\rho | \leq \frac{2}{R}$. From (\ref{ineg}), we get
 \begin{equation}
 \frac{1}{2}\int_{B(p,R)}(f \circ \varphi)h(d\varphi(e_{i}),d\varphi(e_{j}))^{2}dv_{g}
\leq \frac{8}{R^{2}}\int_{B(p,2R)}\frac{\vert d\varphi \vert^{2}| \xi \circ \varphi |^{2}}{f \circ \varphi} dv_{g}.
 \end{equation}
Since $\int_{M^m}\frac{|\xi\circ \varphi|^{2}|d\varphi|^{2}}{f\circ \varphi}dv_{g} < \infty$
 by assumption, letting $R \rightarrow \infty$ yields
$$\int_{M^m}\rho^{2}(f\circ \varphi)h(d\varphi(e_{i}), d\varphi(e_{j}))^{2}dv_{g} = 0.$$
 Consequently, $\varphi$ is constant.
\end{proof}

Since the identity map is symphonic on any Riemannian manifold,
according to Proposition \ref{noncomp}, we get the following Corollary.

\begin{corollary}\cite{cherif3}
Let $(M^m,g)$ be a complete non-compact Riemannian manifold, and let $\xi$ a conformal vector field on
$(M^m,g)$ with potential function $f > 0$. Then, the integral of $\frac{| \xi |^{2}}{f}$ over $M^m$ is infinity.
\end{corollary}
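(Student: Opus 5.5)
The argument is by contradiction, applying Proposition \ref{noncomp} to the identity map $\varphi=\mathrm{Id}:(M^m,g)\longrightarrow(M^m,g)$, with target $(N^n,h)=(M^m,g)$ and the given conformal vector field $\xi$ with potential $f>0$.

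First I will check that the identity is symphonic. Here $d\varphi(e_i)=e_i$, so $\nabla d\varphi=0$. Every term in (\ref{EQ1}) contains a factor $\nabla d\varphi$, hence $\tau^{s}(\mathrm{Id})=\operatorname{div}_g\sigma_{\mathrm{Id}}=0$. Equivalently, $\sigma_{\mathrm{Id}}(X)=g(X,e_j)e_j=X$ is parallel.

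Next I will compute the integrand of the hypothesis of Proposition \ref{noncomp}. We have $|d\varphi|^{2}=\sum_i g(e_i,e_i)=m$, $\xi\circ\varphi=\xi$ and $f\circ\varphi=f$, so
$$\int_{M^m}\frac{|\xi\circ\varphi|^{2}|d\varphi|^{2}}{f\circ\varphi}\,dv_g=m\int_{M^m}\frac{|\xi|^{2}}{f}\,dv_g .$$
Suppose $\int_{M^m}|\xi|^2/f\,dv_g<\infty$. Then all hypotheses of Proposition \ref{noncomp} hold ($M^m$ is complete non-compact and $f>0$), so the identity map would be constant. But $d\,\mathrm{Id}$ is nowhere zero, and $m\ge 1$ because a non-compact manifold has positive dimension, so this is a contradiction. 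Since the integrand $|\xi|^2/f$ is nonnegative, its integral is well defined in $[0,\infty]$, and therefore it equals $+\infty$.

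The only step needing any care is the symphonicity of the identity. The hypothesis of Proposition \ref{noncomp} is stated with general $N$, so I also have to note that the same manifold may serve as source and target. Beyond that there is no real obstacle.
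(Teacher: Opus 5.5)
Your proof is correct and follows the paper's argument: the paper obtains the corollary by noting that the identity map is symphonic and applying Proposition \ref{noncomp} to it. You have filled in the details the paper leaves implicit, namely the vanishing of $\nabla d\,\mathrm{Id}$ in (\ref{EQ1}), the identity $|d\,\mathrm{Id}|^2=m$, and the contradiction with $d\,\mathrm{Id}\neq 0$.
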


\begin{example}
Let $(N^{n-1},h)$ be a Riemannian manifold. The warped product Riemannian manifold $(\mathbb{R}\times_{\lambda} N^{n-1},dt^2+\lambda(t)^2h)$
(where $\lambda$ is a smooth positive function on $\mathbb{R}$) admits $\xi=\lambda(t)\partial_t$
as a conformal vector field with potential function $f(t,y)=\lambda'(t)$ for all $(t,y)\in\mathbb{R}\times N^{n-1}$,
where $\lambda'$ is the derivative function of $\lambda$ (see \cite{BYC}). We suppose that $\lambda'>0$.
According to Proposition \ref{noncomp}, the integral of $\lambda(t)^2/\lambda'(t)$ over $\mathbb{R}\times_{\lambda} N^{n-1}$ is infinity, or  $\mathbb{R}\times_{\lambda} N^{n-1}$ is not complete.
\end{example}


\subsection{Symphonic maps to Ricci solitons}

\begin{proposition}\label{solyt}
Let $(M^m,g)$ be a compact connected orientable Riemannian manifold without boundary, and $(N^n,h,\xi ,\lambda )$ a non-trivial Ricci soliton. We assume that
\begin{equation}\label{ricc}
\operatorname{Ric}^{N^n} > \lambda h  \quad\hbox{or}\quad \operatorname{Ric}^{N^n} < \lambda h.
\end{equation}
Then, any symphonic map $\varphi:(M^m,g)\longrightarrow(N^n,h)$ is constant.
\end{proposition}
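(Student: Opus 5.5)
The plan is to reduce to the argument of Proposition \ref{comp}, with the Ricci soliton equation (\ref{eq1.6}) playing the role of the conformal condition. From (\ref{eq1.6}) we have
$$\frac{1}{2}(\mathfrak{L}_\xi h)(Y,Z)=\lambda h(Y,Z)-\operatorname{Ric}^{N^n}(Y,Z)=:T(Y,Z),$$
and by assumption (\ref{ricc}) the symmetric tensor $T$ is either negative definite everywhere or positive definite everywhere on $N^n$. Replacing $\xi$ by $-\xi$ in the second case (the sign of $\omega$ below then flips, but the divergence argument is insensitive to this), we may assume $T$ is definite with a fixed sign, say $-T$ positive definite.

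Exactly as in the proof of Proposition \ref{comp}, we take the $1$-form $\omega(X)=h(\xi\circ\varphi,\sigma_\varphi(X))$ on $M^m$ and compute its divergence in a normal frame at $x$:
$$\operatorname{div}^{M^m}\omega=h\big(\nabla^{\varphi}_{e_i}\xi\circ\varphi,\sigma_\varphi(e_i)\big)+h(\xi\circ\varphi,\tau^s(\varphi)).$$
The second term vanishes because $\varphi$ is symphonic. For the first, write $\sigma_\varphi(e_i)=A_{ij}\,d\varphi(e_j)$ with $A_{ij}=h(d\varphi(e_i),d\varphi(e_j))$ symmetric. Then the first term equals $A_{ij}\,h(\nabla_{d\varphi(e_i)}\xi,d\varphi(e_j))$. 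By symmetry of $A$, only the symmetric part $\frac12(\mathfrak{L}_\xi h)(d\varphi(e_i),d\varphi(e_j))=T(d\varphi(e_i),d\varphi(e_j))$ survives. Hence
$$\operatorname{div}^{M^m}\omega=\sum_{i,j}A_{ij}\,T(d\varphi(e_i),d\varphi(e_j)).$$

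The main step is showing that this quantity has a strict sign unless $d\varphi=0$ at $x$. Diagonalize the positive semidefinite matrix $A$ by rotating the frame $\{e_i\}$; the expression is frame-independent. We may then assume $A_{ij}=\mu_i\delta_{ij}$ with $\mu_i=|d\varphi(e_i)|^2\ge 0$, so the vectors $d\varphi(e_i)$ are mutually orthogonal. The sum becomes $\sum_i\mu_i\,T(d\varphi(e_i),d\varphi(e_i))$. If $-T$ is positive definite, this is $\le 0$, with equality only when every $\mu_i=0$, that is, when $d\varphi(x)=0$. Equivalently, one can note that $\operatorname{trace}(AS)\ge 0$ for positive semidefinite matrices $A$ and $S=(-T(d\varphi(e_i),d\varphi(e_j)))$, and that in the diagonal frame the diagonal of $S$ is strictly positive wherever $\mu_i>0$.

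Finally we integrate over the compact manifold $M^m$ without boundary. The divergence theorem gives $\int_{M^m}\sum_i\mu_i\,(-T)(d\varphi(e_i),d\varphi(e_i))\,dv_g=0$, and the integrand is nonnegative and continuous, so it vanishes identically. Therefore $d\varphi\equiv 0$, and since $M^m$ is connected, $\varphi$ is constant. The delicate point is only the sign argument in the previous paragraph. The non-triviality of the soliton is not needed beyond guaranteeing that condition (\ref{ricc}) is meaningful.
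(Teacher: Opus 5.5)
Your proof is correct and follows the paper's argument. It uses the same $1$-form $\omega(X)=h(\xi\circ\varphi,\sigma_\varphi(X))$, the symphonic condition to remove the $h(\xi\circ\varphi,\tau^s(\varphi))$ term, and the soliton equation to replace $\frac{1}{2}\mathfrak{L}_\xi h$ by $\lambda h-\operatorname{Ric}^{N^n}$. It then diagonalizes $\varphi^*h$ at each point and applies the divergence theorem, as the paper does. Your explicit use of the symmetry of $A_{ij}=h(d\varphi(e_i),d\varphi(e_j))$ justifies the passage from $h(\nabla^{\varphi}_{e_i}\xi\circ\varphi,\sigma_\varphi(e_i))$ to $\frac{1}{2}(\mathfrak{L}_\xi h)(d\varphi(e_i),\sigma_\varphi(e_i))$, a step the paper leaves implicit.
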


\begin{proof}
Let  $\omega\in\Gamma(T^*M)$ defined by
$$\omega(X)=h(\xi \circ \varphi,\sigma_{\varphi}(X)),\quad\forall X\in \Gamma(TM).$$
Let $\{e_i\}_{i=1}^m$ be a normal orthonormal frame at $x \in M^m$. We have at $x$
\begin{equation}\label{divw1}
\operatorname{div}^{M^m}\omega =e_{i}h\big(\xi \circ \varphi,\sigma_{\varphi}(e_{i})\big),
\end{equation}
by equation (\ref{divw1}), the definition of $\sigma_{\varphi}$, and the symphonic condition of $\varphi$, we get the following
\begin{equation}\label{divw2}
\operatorname{div}^{M^m}\omega =h\big(\nabla^{\varphi}_{e_{i}}\xi \circ \varphi,\sigma_{\varphi}(e_{i})\big)=\frac{1}{2}(\mathfrak{L}_{\xi}h)(d\varphi(e_i),\sigma_{\varphi}(e_{i})),
\end{equation}
from the Ricci soliton equation, we find that
\begin{equation}\label{divw3}
\operatorname{div}^{M^m}\omega =\lambda h\big(d\varphi(e_{i}),\sigma_{\varphi}(e_{i})\big)
-\operatorname{Ric}^{N^n}(d\varphi(e_i),\sigma_{\varphi}(e_{i})),
\end{equation}
by replacing the  expression of $\sigma_{\varphi}(e_i)$ \ in  (\ref{divw3}), we obtain
\begin{equation}\label{divw4}
\operatorname{div}^{M^m}\omega =h\big(d\varphi(e_i),d\varphi(e_{j})\big)\big[\lambda h\big(d\varphi(e_{i}),d\varphi(e_{j})\big)-\operatorname{Ric}^{N^n}(d\varphi(e_i),\varphi(e_{j}))\big].
\end{equation}
We can assume that $\varphi^*h$ is diagonalizable at $x$, so the equation (\ref{divw4}) becomes
\begin{equation}\label{divwf}
\operatorname{div}^{M^m}\omega =h\big(d\varphi(e_i),d\varphi(e_{i})\big)\big[\lambda h\big(d\varphi(e_{i}),d\varphi(e_{i})\big)-\operatorname{Ric}^{N^n}(d\varphi(e_i),\varphi(e_{i}))\big].
\end{equation}
The Proposition \ref{solyt} follows from equation (\ref{divwf}), the assumption (\ref{ricc}), and the divergence Theorem.
\end{proof}

\begin{example}
It is known that the cigar soliton
$\big(\mathbb{R}^{2}, \frac{dx^2 + dy^2}{1+ x^2 + y^2} \big)$
is steady with strictly positive Ricci tensor (see \cite{RH}), according to Proposition \ref{solyt},
any symphonic map $\varphi$ from a compact connected orientable Riemannian manifold without boundary to the cigar
soliton is constant.
\end{example}

\begin{example}
Let $N^{n+1} = \mathbb{R}\times \mathbb{S}^{n}$ equipped with the diagonal Riemannian metric $\overline{h} = dt^{2} +h$, where
$h$ is the induced Riemannian metric on the sphere $\mathbb{S}^{n}$, and let
$f(t,x)=at^{2}+bt +c$ for all $(t,x)\in \mathbb{R}\times \mathbb{S}^{n},$
where $a, b, c \in \mathbb{R}$. Then, $(N^{n+1},\overline{h}, \operatorname{grad}f, \lambda)$ is a Ricci soliton of gradient type, that is $\xi=\operatorname{grad}f$, with $\lambda=n-1$ and $a=(n-1)/2$ (see \cite{mekki}). The Ricci curvature of $(N^{n+1},\overline{h})$ is given by
$\operatorname{Ric}^{N^{n+1}}=(n-1)\overline{h}-(n-1)dt^{2}$, hence $\operatorname{Ric}^{N^{n+1}}< \lambda \overline{h}$. According to Proposition (\ref{solyt}), any symphonic map $\varphi$ from a compact connected orientable Riemannian manifold without boundary to $(N^{n+1},\overline{h}, \operatorname{grad}f, \lambda)$ is constant.
\end{example}

In the case of complete non-compact Riemannian manifold, we obtain the following results.

\begin{theorem}\label{comric}
Let $(M^m,g)$ be a complete non-compact Riemannian manifold, and \ $(N^n,h,\xi ,\lambda )$ \ a non-trivial Ricci soliton such that \ $\operatorname{Ric}^N < \mu h$ for some constant $\mu < \lambda$. \ If
$\varphi : (M^m,g)\longrightarrow (N^n,h)$ is symphonic map, satisfying
$$\int_{M^m}| \xi \circ \varphi|^{2} | d\varphi |^{2}v_{g}< \infty,$$
then $\varphi$ is constant.
\end{theorem}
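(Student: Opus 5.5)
We combine the cut-off argument of Proposition \ref{noncomp} with the soliton identity from the proof of Proposition \ref{solyt}. Fix a smooth compactly supported function $\rho$ on $M^m$ and set
$$\omega(X)=h(\xi\circ\varphi,\rho^{2}\sigma_{\varphi}(X)),\quad \forall X\in\Gamma(TM).$$
As in the proof of Proposition \ref{noncomp}, the divergence splits into three terms. The term containing $\nabla^{\varphi}_{e_i}\sigma_{\varphi}(e_i)=\tau^s(\varphi)$ vanishes because $\varphi$ is symphonic. By (\ref{divw2})--(\ref{divw4}), the term containing $\nabla^{\varphi}_{e_i}\xi\circ\varphi$ becomes
$$\rho^{2}h(d\varphi(e_i),d\varphi(e_j))\big[\lambda h(d\varphi(e_i),d\varphi(e_j))-\operatorname{Ric}^{N}(d\varphi(e_i),d\varphi(e_j))\big].$$
Hence
$$\operatorname{div}^{M^m}\omega=\rho^2 Q+2\rho e_i(\rho)h(\xi\circ\varphi,\sigma_\varphi(e_i)),$$
where $Q$ denotes the bracketed expression above.

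The key pointwise step is a lower bound $Q\ge(\lambda-\mu)|\varphi^*h|^2$. At a point, choose $\{e_i\}$ diagonalizing $\varphi^*h$, as in (\ref{divwf}). Then
$$Q=\sum_i|d\varphi(e_i)|^2\big[\lambda|d\varphi(e_i)|^2-\operatorname{Ric}^N(d\varphi(e_i),d\varphi(e_i))\big].$$
The hypothesis $\operatorname{Ric}^N<\mu h$ gives
$$Q\ge(\lambda-\mu)\sum_i|d\varphi(e_i)|^4=(\lambda-\mu)|\varphi^*h|^2=:c\,|\varphi^*h|^2,\qquad c>0.$$
The diagonalization is legitimate because both sides of the inequality are frame-independent traces.

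Next we absorb the cross term. Since $|\sigma_\varphi(e_i)|^2\le h(d\varphi(e_i),d\varphi(e_j))^2|d\varphi|^2$, we have $\sum_i|\sigma_\varphi(e_i)|^2\le|\varphi^*h|^2|d\varphi|^2$. Young's inequality with a constant $\varepsilon>0$ gives
$$|2\rho e_i(\rho)h(\xi\circ\varphi,\sigma_\varphi(e_i))|\le \frac{c}{2}\rho^2\frac{\sum_i|\sigma_\varphi(e_i)|^2}{|d\varphi|^2}+\frac{2}{c}|\operatorname{grad}\rho|^2|\xi\circ\varphi|^2|d\varphi|^2$$
on $M_+=\{|d\varphi|>0\}$. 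The first term is at most $\frac c2\rho^2|\varphi^*h|^2$, and off $M_+$ the cross term vanishes anyway. This weight is chosen precisely so that the remainder is $|\xi\circ\varphi|^2|d\varphi|^2$, which is the integrand of the hypothesis. Integrating and using the divergence theorem for the compactly supported form $\omega$ yields
$$\frac c2\int_M\rho^2|\varphi^*h|^2dv_g\le\frac2c\int_M|\operatorname{grad}\rho|^2|\xi\circ\varphi|^2|d\varphi|^2dv_g.$$

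Finally, take the cut-off $\rho=\rho_R$ from the proof of Proposition \ref{noncomp}, with $|\operatorname{grad}\rho_R|\le 2/R$. The right-hand side is then bounded by $\frac{8}{cR^2}\int_M|\xi\circ\varphi|^2|d\varphi|^2dv_g$, which tends to $0$ as $R\to\infty$ by the integrability assumption. Hence $\varphi^*h=0$, so $d\varphi=0$; since a complete manifold is connected, $\varphi$ is constant.

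The main obstacle is the pointwise bound $Q\ge c|\varphi^*h|^2$. It requires diagonalizing $\varphi^*h$ and using the strict gap $\mu<\lambda$ uniformly rather than the pointwise strict inequality of Proposition \ref{solyt}. It is exactly this uniform gap that lets the Young absorption work with a constant weight.
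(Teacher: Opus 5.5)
Your proof is correct and follows essentially the same route as the paper: the same cut-off $1$-form $\omega=h(\xi\circ\varphi,\rho^2\sigma_\varphi(\cdot))$, the soliton identity for the $\nabla\xi$ term, Young's inequality with a weight proportional to $1/|d\varphi|^2$ on $M_+$ (the paper takes $\epsilon=(\lambda-\mu)/|d\varphi|^2$), and the cut-off $\rho_R$ with $|\operatorname{grad}\rho_R|\le 2/R$. The only difference is bookkeeping: you first prove $Q\ge(\lambda-\mu)|\varphi^*h|^2$ by diagonalizing and conclude from $\int_M|\varphi^*h|^2=0$, whereas the paper absorbs the Young term to replace $\lambda$ by $\mu$ and concludes from the positivity of $\mu h-\operatorname{Ric}^N$, leaving implicit the diagonalization needed to see that this integrand is nonnegative; your ordering makes that step explicit and also shows that $\operatorname{Ric}^N\le\mu h$ would already suffice.
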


\begin{proof}
Let $\rho$ be a smooth function with compact support on $M$. Define
$$\omega(X)=h(\xi \circ \varphi,\rho^{2}\sigma_{\varphi}(X)), \quad \forall X\in \Gamma(TM).$$
Given a normal orthonormal frame $\{e_i\}_{i=1}^m$ at $x \in M^m$, we have at $x$
\begin{equation}\label{divw6}
\operatorname{div}^{M^m}\omega =e_{i}h\big(\xi \circ \varphi,\rho^{2}\sigma_{\varphi}(e_{i})\big),
\end{equation}
From Equation \eqref{divw6}, and the symphonic condition of $\varphi$, we obtain
\begin{equation}\label{divw7}\begin{aligned}
\operatorname{div}^{M^m}\omega &=h\big(\nabla^{\varphi}_{e_i}\xi \circ \varphi,\rho^{2}\sigma_{\varphi}(e_{i})\big)+
h\big(\xi \circ \varphi,\nabla^{\varphi}_{e_i}\rho^{2}\sigma_{\varphi}(e_{i})\big)\\
&= \rho^{2} h\big(\nabla^{\varphi}_{e_i}\xi \circ \varphi,\sigma_{\varphi}(e_{i})\big)+2 \rho e_i(\rho)
h(\xi \circ \varphi,\sigma_{\varphi}(e_i)).
\end{aligned}\end{equation}
Using the Ricci soliton equation, and the definition of $\sigma_{\varphi}$, we find that
\begin{equation}
\rho^{2} h\big(\nabla^{\varphi}_{e_i}\xi \circ \varphi,\sigma_{\varphi}(e_{i})\big)=\lambda \rho^{2} h\big( d\varphi(e_i),\sigma_{\varphi}(e_{i})\big)-\rho^{2}\operatorname{Ric}^{N^n}(d\varphi(e_i),\sigma_{\varphi}(e_{i})).
\end{equation}
Thus, equation \eqref{divw7} becomes
\begin{equation}\label{divw8}\begin{aligned}
\operatorname{div}^{M^m}\omega & = \lambda \rho^{2} h\big( d\varphi(e_i),\sigma_{\varphi}(e_{i})\big)-\rho^{2}\operatorname{Ric}^{N^n}(d\varphi(e_i),\sigma_{\varphi}(e_{i}))\\
&\quad+2 \rho e_i(\rho)h(\xi \circ \varphi,\sigma_{\varphi}(e_i)).
\end{aligned}\end{equation}
By using Young's inequality, we have
\begin{equation}\label{young}
-2\rho e_{i}(\rho)h(\xi \circ \varphi,\sigma_{\varphi}(e_{i}))\leq \frac{1}{\epsilon} e_{i}(\rho)^{2}| \xi\circ\varphi |^{2}+ \epsilon \rho^{2} | \sigma_{\varphi}(e_{i}) |^{2},
\end{equation}
for all $\epsilon > 0$. From equation \eqref{divw8}, and inequality \eqref{young}, we deduce
\begin{eqnarray}\label{ineg1}
\lambda \rho^{2} h\big( d\varphi(e_i),\sigma_{\varphi}(e_{i})\big)-\rho^{2}\operatorname{Ric}^{N^n}(d\varphi(e_i),\sigma_{\varphi}(e_{i}))-\operatorname{div}^{M^m}\omega &\leq&\nonumber \frac{1}{\epsilon} e_{i}(\rho)^{2}| \xi\circ\varphi |^{2}\\
&&\nonumber+ \epsilon \rho^{2} | \sigma_{\varphi}(e_{i}) |^{2}.\\
\end{eqnarray}
Since $\sigma_{\varphi}(e_{i})=h(d\varphi(e_i),d\varphi(e_j))d\varphi(e_j)$  and $\forall j=\overline{1,m}, \quad | d\varphi(e_{j}) |^{2}\leq | d\varphi |^{2}$, from inequality \eqref{ineg1}, we get
\begin{eqnarray}\label{ineg2}
\lambda \rho^{2} h\big( d\varphi(e_i),d\varphi(e_{j})\big)^2   &-&\nonumber\rho^{2}h\big(d\varphi(e_i),d\varphi(e_{j})\big)\operatorname{Ric}^{N^n}(d\varphi(e_i),d\varphi(e_{j}))\\
&\leq&\nonumber \operatorname{div}^{M^m}\omega +\frac{1}{\epsilon} e_{i}(\rho)^{2}| \xi\circ \varphi |^{2}\\
& & + \epsilon \rho^{2}h\big( d\varphi(e_i),d\varphi (e_{j})\big)^2 | d\varphi |^{2}.
\end{eqnarray}
Letting $\epsilon =\frac{\lambda - \mu }{ | d\varphi |^{2}}$ on $M^m_+=\{x\in M^m\,,\,\vert d\varphi \vert>0\}$, we obtain from \eqref{ineg2}
\begin{eqnarray}\label{ineg3}
 \rho^{2}h\big(d\varphi(e_i),d\varphi(e_{j})\big)\Big[\mu h\big( d\varphi(e_i),d\varphi(e_{j})\big)
   &-& \operatorname{Ric}^{N^n}(d\varphi(e_i),d\varphi(e_{j}))\Big] \\
   &\leq&\nonumber \operatorname{div}^{M^m}\omega +\frac{| d\varphi |^{2}}{\lambda - \mu} e_{i}(\rho)^{2}| \xi \circ \varphi |^{2}.
\end{eqnarray}
By using the divergence Theorem, and inequality \eqref{ineg3}, we have
\begin{equation}\label{ineg4}\begin{aligned}
 \int_{M^m}\rho^{2}h\big(d\varphi(e_i),d\varphi(e_{j})\big)\Big[\mu h\big( d\varphi(e_i),d\varphi(e_{j})\big)&-\operatorname{Ric}^{N^n}(d\varphi(e_i),d\varphi(e_{j}))\Big]dv_g   \\
& \leq\frac{1}{\lambda - \mu}\int_{M^m} e_{i}(\rho)^{2}| d\varphi |^{2}| \xi \circ \varphi |^{2}dv_g,
\end{aligned} \end{equation}
Consider the cut-off smooth function $\rho = \rho_{R}$ such that $\rho \leq 1$ on $M$, $\rho = 1$ on the ball $B(p,R)$, $\rho = 0$ on $M^m \setminus B(p,2R)$, and $| \operatorname{grad}^{M^m}\rho | \leq \frac{2}{R}$. From \eqref{ineg4}, we get
\begin{equation}\label{ineg5}\begin{aligned}
 \int_{B(p,R)}h\big(d\varphi(e_i),d\varphi(e_{j})\big)\Big[\mu h\big( d\varphi(e_i),d\varphi(e_{j})\big)&-\operatorname{Ric}^{N^n}(d\varphi(e_i),d\varphi(e_{j}))\Big]dv_g   \\
&\leq\frac{4}{(\lambda - \mu)R^2}\int_{B(p,2R)}| d\varphi |^{2}| \xi \circ \varphi |^{2}dv_g,
\end{aligned} \end{equation}
 Since $\int_{M^m}| \xi \circ \varphi|^{2} | d\varphi |^{2}v_{g}< \infty$, when $R \rightarrow +\infty$, we obtain
 $$ \int_{M^m}h\big(d\varphi(e_i),d\varphi(e_{j})\big)\Big[\mu h\big( d\varphi(e_i),d\varphi(e_{j})\big)-\operatorname{Ric}^{N^n}(d\varphi(e_i),d\varphi(e_{j}))\Big]dv_g=0.$$
 Recall that  $ \mu h-\operatorname{Ric}^N > 0  $, we conclude that
  $d\varphi(e_i) = 0$ for all $i=\overline{1,m}$, that is $\varphi$ is constant.
\end{proof}


\subsection*{Conflict of interest}
The author declares no conflict of interest.

\subsection*{Data availability}
Not applicable.

\subsection*{Funding Declaration}
No funding was received.

\end{document}